\def\la{\langle}
\def\ra{\rangle}
\def\Gr{Gr\"obner}
\def\HS{{\mathrm{HS}}}
\def\LM{{\mathrm{LM}}}
\def\lm{{\mathrm{lm}}}
\def\Tor{{\mathrm{Tor}}}
\newcommand{\Z}{\ensuremath{\mathbb Z}}
\begin{document}

\title*{Context-free languages and associative algebras with algebraic
Hilbert series}

\titlerunning{Context-free languages and associative algebras}

\author{Roberto La Scala and Dmitri Piontkovski}
\authorrunning{R. La Scala and D. Piontkovski}

\institute{Roberto La Scala 
	\at Dipartimento di Matematica,
	Universit\`a di Bari, Via Orabona 4, 70125 Bari, Italy, 
    \email{roberto.lascala@uniba.it}
	\and 
	Dmitri Piontkovski 
	\at Department of Mathematics for Economics,
	National Research University Higher School of Economics,
	Myasnitskaya ul. 20, Moscow 101000, Russia 
	\email{dpiontkovski@hse.ru}
}

%
%

\maketitle

\abstract{In this paper, homological methods together with the theory of formal languages of theoretical computer science are proved to be effective tools to determine the growth and the Hilbert series of an associative algebra. Namely, we construct a class of finitely presented associative algebras related to a family of context-free languages. This allows us to connect the Hilbert series of these algebras with the generating functions of such languages. In particular, we obtain a class of finitely presented graded algebras with non-rational algebraic Hilbert series.}

\section{Introduction}
\label{sec:intro}

The Hilbert series (or growth series) of graded and filtered structures
is one of the most important invariants for infinite dimensional
algebraic objects. In particular for associative algebras, such series is
the most natural tool for finding the growth. 
For a number of important classes of algebras, Hilbert series are of special form, so that they are useful to characterize Koszul algebras, Noetherian algebras, some classes of PI algebras, algebras of small homological dimensions (such as noncommutative complete intersection) and many other algebras
(see, for instance \cite{KL, Uf2}). 

For many important classes, the Hilbert series is a rational function.
It was Hilbert himself who proved this property for
(finitely generated) commutative algebras.
Govorov \cite{Go1} proved in 1972 that
finitely presented monomial algebras have rational Hilbert series.
After that, the rationality of Hilbert series has been proved for a number of classes of associative algebras, such as, for example, 
prime PI algebras~\cite{Bell_prime_PI} and relatively free PI algebras~\cite{Belov_rat_PI}.

Moreover, Ufnarovski have introduced~\cite{Uf1} a general class of algebras with rational Hilbert series by connecting the theory of algebras with the theory of formal languages of theoretical computer science. The regular languages are the ones
that are recognized by finite-state automata and it is well-known that
such languages have rational generating functions (see, e.g., \cite{KS}).
A finitely generated algebra defined by a monomial set of relations is called automaton
(or Ufnarovski automaton) if the set of relations forms a regular language,
see details in Subsection~\ref{subs:automaton}. Moreover, the set of normal words
of such an algebra is also a regular language.  Since regular languages are known
to have rational generating functions, the Hilbert series of automaton algebras
are always rational. Moreover, the finitely presented monomial algebras are automaton,
so that the Govorov's rationality theorem follows from this result as a particular case.
Optimal algorithms to compute the rational (univariate and multivariate) Hilbert series
of an automaton algebra are due to La Scala and Tiwari \cite{LS2,LT}.

Govorov had conjectured in 1972 that all finitely presented graded algebras
have rational series. However, a couple of counterexamples were found
by Shearer \cite{Sh} in 1980 and Kobayashi \cite{Ko} in 1981. 
We remark that the corresponding non-rational Hilbert series were algebraic
functions, that is, roots of polynomials with coefficients in the rational
function field. At the same time, classes of finitely presented universal enveloping
algebras with intermediate growth (having hence transcendental Hilbert series)
were also discovered in~\cite{Uf2}. Examples of such algebras have been recently
introduced also by Ko\c{c}ak \cite{Kc1,Kc2}. Other examples of finitely presented
algebras with transcendental Hilbert series are recently given in~\cite{IS, P2019}.
Note that for important classes of algebras (such as Koszul algebras or graded
Noetherian algebras) the question about rationality of Hilbert series is still open.

Whereas a number of algebras with either rational or transcendental Hilbert series are known, there are only few examples of algebras with non-rational {\em algebraic} Hilbert series. We have therefore defined the class of {\em homologically unambiguous} algebras~\cite{our_jsc_paper}. These are the monomial algebras such that their relations together with the monomial bases of their homologies are unambiguous context-free languages (see details in Subsection~\ref{subs:unambiguous}). If such an algebra has finite homological dimension, then its Hilbert series is algebraic. An example of a finitely presented graded algebra with non-rational algebraic Hilbert series is constructed in~\cite{our_jsc_paper}.
 This is in fact an algebra such that the associated monomial algebra is homologically unambigous. 

In this paper, we give a general method to construct a finitely presented
algebra of finite homological dimension (in fact, we provide three of them)
such that for the associated monomial algebra, both the set of relations and
the monomial bases of the homologies are context-free languages. Namely, for each context-free language $L$ which is a homomorphic image of the Dyck language over a finite alphabet, we construct a finitely presented algebra such that its Hilbert series is calculated in terms of the generating function $H_L(z)$ of the language $L$.

The paper is organized as follows. In Section~\ref{sec:preliminaries},
we briefly recall the notions of Hilbert series of associative algebras,
monomial algebras, and context-free languages.
In Section~\ref{sec:main_results}, we discuss a construction that assigns a finitely presented algebra to such a language and give explicit formulae for the homologies and the Hilbert series of the algebra. Finally, we describe several examples of such algebras with particular choices of $L$. These are new examples of finitely presented graded algebras with non-rational algebraic Hilbert series.

\begin{acknowledgement}
	{We acknowledge the support of the University of Bari. The research
		of the author D.P. is supported by RFBR project 18-01-00908.}
\end{acknowledgement}

\section{Preliminaries}

\label{sec:preliminaries}

\subsection{Associative algebras and their Hilbert series}
\label{subs:HS}

The free monoid generated by a set $X$ is denoted by $X^*$.
Following theoretical computer science, we call the elements of $X^*$
{\em words} and the subsets of $X^*$ {\em languages}.
 
Let $A$ be a unital associative algebra over a field $k$ generated by a finite subset $X$. The words and languages then correspond to elements and subsets of $A$ which we denote by the same symbols.

Let us define a degree function on $A$ 
by putting $\deg x_i = d_i \in \Z_{>0}$ for all $x_i \in X$. Then, we put
$\deg w = d_{i_1} + \dots + d_{i_s}$ for a word $w = x_{i_1} \cdots x_{i_s}
\in X^*$ and $\deg a = \max\{ \deg w_i \}$ for an element $0 \ne a =
\sum_i c_i w_i\in A$ ($c_i\in k$).
This gives an ascending filtration  $F_0 = k 1$, $F_d =
k \{a\mid \deg a \le d \}$ on $A$. 

The Hilbert series of $A$ is then defined as the formal power series 
$$
H_A(z) = \sum_{n\ge 0} z^n \dim (F_n/F_{n-1}).
$$
The algebra $A$ is {\em graded} if it is equal to the direct sum 
$A_0\oplus A_1\oplus A_2\oplus \dots$, where $A_0 = k1$ and $A_d =
k \{w\mid \deg w = d \}$. In this case, we have $H_A(z) =
\sum_{n\ge 0} z^n \dim A_n.$

We assume now that the reader is familiar with the theory of noncommutative
\Gr\ bases which are also called \Gr-Shirshov bases (see, for instance, \cite{BC,Mo,Uf2}).
We recall here some basic foundations.
Let $I$ be a two-sided ideal of the free associative algebra
$F = k \langle X\rangle$ such that $A = F/I$. 
Suppose we have a multiplicative well-ordering $\prec$ on $X^*$. 
This gives a monomial ordering on $F$. Then, let $0\neq f =
\sum_{i=1}^s c_i w_i\in F$ with $0\neq c_i\in k, w_i\in X^*$ and
$w_1\succ w_2\succ \ldots \succ w_s$.
The word $\lm(f) = w_1$ is called the {\em leading monomial of $f$}.
A (possibly infinite) subset $U\subset I$ is called a {\em \Gr-Shirshov
basis}, briefly a {\em GS-basis of $I$}, if $\lm(U) =
\{\lm(f)\mid 0\neq f\in U\}\subset X^*$ is a monomial basis of the monomial
ideal
\[
\LM(I) = \langle \lm(f)\mid 0\neq f\in I \rangle\subset F.
\]
The GS-basis $U$ is called {\em minimal} if the monomial basis $\lm(U)$
is such. We call $\LM(I)$ the {\em leading monomial ideal of $I$}.
If $J = \LM(I)$, one defines the corresponding monomial algebra $B = F/J$.

If the algebra $A$ is graded, then it is is easy to prove
that $\HS(A) = \HS(B)$. Moreover, the same is true for non-graded algebras
if the ordering is degree-compatible, that is, $w_1\prec w_2$ provided that
$\deg w_1 < \deg w_2$. So, to compute the Hilbert series of the general
algebra $A$ it is sufficient to calculate it for the associated monomial
algebra $B$.

\subsection{The homology and Hilbert series of monomial algebras}
\label{subs:automaton}

In this sections, we recall some basic facts about monomial algebras and their homology. We have discussed this topic in details in~\cite{our_jsc_paper}. For a complete introduction, we refer the reader to~\cite{LS1,Uf2}. 

Let $A = k\la X \ra /\la L \ra$ be an associative algebra generated by a finite set $X = \{x_1 ,\dots , x_n \}$ subject to a monomial set of relations $L_1 \subset X^*$. We assume that this set of  relations is minimal, that is, the language $L_1$ is subword-free.  

The homology $\Tor_\cdot^A(k,k)$ of monomial algebras are calculated via so-called chains~\cite{An, Ba1}. More precisely, we have  
$\Tor_0^A(k,k) = k$, while for $n\ge 0$ the graded vector space $\Tor_{n+1}^A(k,k)$ is isomorphic to the span of a language $L_n$ called {\em the language of $n$-chains} of the monomial algebra $A$. Denote $L = X^* L_1 X^*$ and $X^+ = X^* \setminus \{1 \}$.
Then, for all $t\geq 1$ it holds that
\begin{equation*}
\label{govform}
\begin{array}{rcl}
L_{2t} & = & (X^+ L^t \cap L^t X^+)\setminus (X^+ L^t X^+ \cup L^{t+1}), \\
L_{2t-1} & = & (X^+ L^{t-1} X^+ \cap L^t) \setminus (X^+ L^t \cup L^t X^+).
\end{array}
\end{equation*}
In particular, $L_0 = X$, and for $t=1$ we get $L_1$ both on the left- and right-hand sides.
 
Note that the classical definition of chains is recursive and the above one is based on the Govorov's formulae for homologies of associative algebras~\cite{Go2}. We discuss these definitions in~\cite{our_jsc_paper}. 

Given a degree function on $X^*$, one can define a generating function of any language $W\subset X^*$ 
by $H_W(z) = \sum_{w\in W} z^{\deg w} $. 
Then, the Hilbert series of the graded vector space 
$\Tor_{n+1}^A(k,k)$ is equal to $H_{L_n}(z)$. 
>From the exact sequence corresponding to the minimal free resolution 
of the $A$-module $k$, 
one gets the following formula for its Hilbert series:
\begin{equation}
\label{hilbhom}
H_A(z) = \left( 1 - \sum_{i=0}^k (-1)^i H_{L_i}(z) \right)^{-1}.
\end{equation}

\subsection{Automaton algebras and homologically unambiguous algebras} 
\label{subs:unambiguous}

The automaton algebras were introduced by Ufnarovski~\cite{Uf1}. In our terms,
one can defined them as follows.
\begin{definition}
	A monomial algebra is called automaton, if the following equivalent conditions hold:
	
	(i) the set $S$ of nonzero words in $A$ is a regular language;
	
	(ii) the set of relations $L_1$ is a regular language;
	
	(iii) the chain languages $L_n$ are regular, for all $n$.
\end{definition}	
We refer the reader to~\cite{LS2, LT, Uf1, Uf2} for details on automaton algebras. In particular, the Hilbert series of each automaton algebra is a rational function. It can be calculated using the methods of formal language theory as the generating function of the regular language $S$
(see~\cite{LS2,our_jsc_paper,LT}). 

If an algebra $A$ has a non-rational Hilbert series, it cannot be automaton. So, for such algebras the condition that the corresponding languages are automaton should be weakened. A more general class of languages is the class of context-free (c-f for short) languages.
Unfortunately, there does not exist an algorithm to calculate the generating function of any c-f language. However, for some classes of c-f languages such algorithms do exist. The most important result in this direction is a theorem by Chomsky and Sch\"utzenberger stating that the generating function of each {\em unambiguous} c-f language is an algebraic function. Moreover, the theorem provides a way to construct a system of algebraic equations which defines the generating function. For a detailed description of these methods, see \cite{our_jsc_paper,KS}). 

A monomial algebra is called {\em (homologically) unambiguous} if all chain languages $L_n$ are unambiguous c-f. We refer the reader to~\cite{our_jsc_paper} for the discussion and examples of such algebras. Note that unlike the automaton case, it is not sufficient to check this condition for $L_1$
\cite[Example~5.4]{our_jsc_paper}. 
If the unambiguous algebra $A$ has finite homological dimension, then it follows from~(\ref{hilbhom}) that 
the Hilbert series $H_A(z)$ is an algebraic function.

Suppose that a finitely generated  algebra $A$ is not monomial. If the associated monomial algebra $\widehat A$ is unambiguous, then $H_A(z)$ is algebraic. In~\cite{our_jsc_paper}, we have provided an example of a finitely presented algebra of that kind such that its Hilbert series is not rational.

\section{Finitely presented algebras associated to context-free languages}

\label{sec:main_results}

\subsection{The general construction}

	Let us describe a  
	class of finitely presented associative  algebras. Each algebra of this class is related to an arbitrary context free language $L$ which is a homomorphic image of a Dyck language $D_n = D_n(a_1, b_1, \dots, a_n,b_n)$. Recall that $D_n$ consists of the words with balanced parentheses of $n$ possible kinds, where $a_i$ is the opening parenthesis and $b_i$ is the closing parenthesis of the $i$-th pair.  Note that a classical theorem by Chomsky and 
	Sch\"utzenberger provides that each context-free language is an intersection of such a language $L$ with a regular one, so that this class of languages is quite general.

	Suppose $L = \phi (D_n) \subset T^*$ with $T = \{t_1, \dots, t_m\}$. 
	Let $A$ be an algebra generated by the set $X$ of  variables 
	$a_i,b_i, a_{i}^{j}, b_{i}^{j},x,e,y, t_k$, 
	where $i,j$ run in $\{1,\ldots,n\}$ and $k$ runs in $\{1,\ldots,m\}$. 
	The relations are defined for each $i,j,l\in \{1,\ldots,n\}$ as follows:
	
	(i) $a^i_ix-xa^i_i, b^1_1 x-xe$;
	
	(ii) $a_i^j a_l-a_i a_l^j, a_i^j b_l-a_ib_l^j,  b_i^j a_l-b_ia_l^j, b_i^j b_l-b_ib_l^j, a_i^j e-a_i b_j, b_i^je-b_i b_j$;
	
	(iii) $a_i y-y \phi(a_i), b_i y-y \phi(b_i), a_i^j y, b_i^j y$;
	
	(iv) $xye$.
	
	We denote by $R$ the set of the above relations.

	Denote $d = \max\{\deg \phi(a_1), \deg \phi(b_1), \dots, \deg \phi(a_n), \deg \phi(b_n)\}$. 	
	We introduce a new degree function on the variables as follows:
	$|a_i| = |b_i| = |a_i^j| = |b_i^j| = |x| = |y| = |e| = D$ and $|t_k|=1$
	for all possible $i,j,k$.  
	Set a deglex ordering over the words on all variables by  putting $a_\bullet^\bullet >b_\bullet^\bullet>a_\bullet>b_\bullet>e>x>y> t_\bullet$.
	
	\begin{theorem}
		\label{th:main_example}
		The minimal Gr\"obner basis of the ideal generated by $R$
		is the disjoint union of the set of the relations  (i)--(iii)
		with the set of monomials
		$$
		    xP_nyLe,
		$$
		where $P_n = (D_n e)^*$ is the language consisting of the empty word and the words of the form
		$$
		   e^{i_0}w_1 e^{i_1}\dots w_s e^{i_s} 
		$$
		for all $s, i_0\ge 0$ , $i_1,\dots, i_{s} > 0$ and $w_1, \dots , w_s \in D_n$.
	\end{theorem}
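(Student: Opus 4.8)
The plan is to exhibit the indicated set $G$ as the minimal \Gr-Shirshov basis of $I=\la R\ra$ by the standard criterion: $G$ generates $I$, no leading monomial of $G$ properly divides another, and every overlap ambiguity resolves. First I would record the leading monomials: for the stated degree function (for the comparison of leading terms one uses $D\ge d$, so that $a_iy\succ y\phi(a_i)$, etc.) they are $a_i^ix$ and $b_1^1x$ from (i); the six words $a_i^ja_l,\ a_i^jb_l,\ b_i^ja_l,\ b_i^jb_l,\ a_i^je,\ b_i^je$ from (ii); $a_iy,\ b_iy,\ a_i^jy,\ b_i^jy$ from (iii); and $xye$ from (iv). Accordingly $G$ should consist of the binomials and monomials of (i)--(iii) together with the monomial language $xP_nyLe$, which contains $xye$ (the case $p=1$, $\ell=1$, since $1\in D_n$ and $\phi(1)=1$) and hence subsumes relation (iv).

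The technical core is a \emph{migration lemma}: for every $v\in D_n$ and every word $z$,
\[
  x\,v\,z \;=\; \mu(v)\,x\,z \quad\text{in } A,
\]
where $\mu(v)$ is a word of length $|v|$ in the ``diagonal'' variables $a_i^i$ and in $b_1^1$; in particular (replace $z$ by $ez$ and use $xe=b_1^1x$) $x\,v\,e\,z=\mu(v)\,b_1^1\,x\,z$. I would prove this by induction on $|v|$, with $\mu(1)=1$. Writing $v=a_i\,u\,b_i\,v'$ for its first top-level block and the remainder ($u,v'\in D_n$), use the relations of (ii) read backwards---$a_ib_i\to a_i^ie$ (or $b_\bullet b_i\to b_\bullet^ie$ when $u\ne1$), then drag the new superscript $i$ leftward through $u$ by the relations $c\,d^j\to c^j d$ of (ii) ($c,d$ any $a_\bullet$ or $b_\bullet$)---to rewrite $a_i\,u\,b_i\to a_i^i\,u\,e$; then $x\,a_i^i=a_i^i\,x$ by (i); then invoke the inductive hypothesis on $u$ (carrying its trailing $e$) and on $v'$. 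This yields $\mu(v)=a_i^i\,\mu(u)\,b_1^1\,\mu(v')$ and uses only relations (i) and (ii).

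Granting the lemma, I would show $xpy\ell e\in I$ for all $p\in P_n$ and $\ell\in L$, so that $\la G\ra=\la R\ra=I$. Write $p=(v_1e)\cdots(v_re)$ with $v_k\in D_n$ and $\ell=\phi(v)$ with $v\in D_n$. Sliding each block $v_ke$ past $x$ gives $xpy\ell e=w\cdot xy\ell e$ for a word $w$ in the $a_i^i$ and $b_1^1$; the relations of (iii) give $y\phi(v)=vy$, so $xy\ell e=xvye=\mu(v)\,xye=0$ by (iv). Hence $xpy\ell e=0$ in $A$.

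It remains to resolve the overlap ambiguities, which together with the migration lemma is where the actual work lies. The length-$2$ leading monomials overlap only in the pairs $\{a_i^ja_l,\ b_i^ja_l\}$ with $a_ly$ and $\{a_i^jb_l,\ b_i^jb_l\}$ with $b_ly$, and each such S-polynomial reduces to $0$ in one step via the monomial relations $a_\bullet^jy$, $b_\bullet^jy$ of (iii). A word $xpy\ell e$ carries no superscripts and contains each of $x,y$ exactly once; hence no length-$2$ leading monomial is a subword of it and no $xpy\ell e$ is a subword of another (which also settles minimality), so the only further ambiguities are overlaps of $a_i^ix$ or $b_1^1x$ with $xpy\ell e$ on the letter $x$. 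For $b_1^1x$ the S-polynomial is $-x\,e\,p\,y\ell e$ and $ep\in P_n$, so it is already a left multiple of a basis element. For $a_i^ix$ the S-polynomial is $-x\,a_i^i\,p\,y\ell e$; if $p=1$ the relation $a_i^iy$ finishes it, and otherwise, writing $p=v_1e\,p'$, run the migration \emph{forward}: $a_i^i$ is absorbed into $v_1$ and its superscript drifts rightward until consumed by the next $e$ via $(\text{letter})^ie\to(\text{letter})\,b_i$, producing the word $x\,a_i\,v_1\,b_i\,p'\,y\ell e$ (with $v_1$ possibly empty). Since $a_iv_1b_i\in D_n$: if $p'=w_1e\,p''$ then $a_iv_1b_i\,p'=(a_iv_1b_iw_1)\,e\,p''\in P_n$, so the word again lies in $xP_nyLe$; if $p'=1$ then reducing $a_iv_1b_i$ past $y$ via the binomials of (iii) turns the word into $x\,y\,\phi(a_iv_1b_iv)\,e\in xP_nyLe$ (as $a_iv_1b_iv\in D_n$). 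In every case the S-polynomial reduces to $0$, so $G$ is the minimal \Gr-Shirshov basis. I expect the bookkeeping of how superscripts travel---in the migration lemma and in the $a_i^ix$-overlaps---to be the main obstacle; each individual step is a short rewrite.
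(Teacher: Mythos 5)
Your proposal is correct and takes essentially the same route as the paper: the same overlap analysis (the (ii)--(iii) overlaps killed by the monomial relations of (iii), and the overlaps of $a_i^ix$ and $b_1^1x$ with $xP_nyLe$ resolved by migrating the superscript rightward until an $e$ consumes it), with your migration lemma being the paper's reduction of $\alpha(wv)xye$ to $xwy\phi(v)e$ run in reverse. One nitpick: to exclude the superscript-free leading monomials $a_iy$, $b_iy$ as subwords of $xpy\ell e$ you should appeal to the fact that the letter preceding $y$ there is $x$ or $e$, not to the absence of superscripts.
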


\begin{proof}
	First, note that the relations (i)--(iii) form the minimal Gr\"obner basis of the ideal generated by them. Indeed, the first term of each relation is equal to its leading monomial. The only overlapping of them are between the ones of the first four types of relations  (ii) and the leading monomials ot the first two types of relations (iii). In all cases, the resulting s-polynomials are reduced to zero. For example, 
	the intersection of the leading monomials of  $a_i^j b_l-a_ib_l^j$  and
	$b_l y-y \phi(b_l)$ gives an s-polynomial
	$$
	 (a_i^j b_l-a_ib_l^j)y - a_i^j(b_l y-y \phi(b_l)) = 
	 a_i^j y \phi(b_l) -a_ib_l^j y,
	$$  
	which is immediately reduced to zero by the last two elements of (iii).
	All other cases of overlapping are similar.
	
	Now, we are ready to prove that the Gr\"obner basis mentioned in the theorem consists of the relations (i)--(iv) and a subset of $xP_n yLe$. We proceed by the induction on the length $d$ of the leading monomial of a Gr\"obner basis element which we denote by $g$. The induction base $d=2$ follows immediately.
	
	Let $d\ge 3$. The element $g$ is obtained as a complete reduction of an s-polynomial based on the elements having leading monomials with lower lengths. By the induction, the only possible (new) overlappings of such leading monomials are between the relations of type (i) and some element  $g' \in xP_n yLe$ of the Gr\"obner basis. Let $g' = x p y ev$ (with $p\in P_n, v\in L$). Then, $g$ is the complete reduction of  one of the s-polynomials
	$s_1 = a^i_i g' -  (a^i_ix-xa^i_i) p y ve = xa^i_i p y v e$
	or $s_2 = b^1_1 g' -  (b^1_1x-xe) p y ve = xe p y ve$. Here $s_2$ belongs to $xP_n yLe$, so that we can assume that $g$ is the complete reduction of 
	$s_1$.
	
	If $p=1$, then $s_1 = xa^i_i y ve$ is reduced to 0 by (iii).
	
	Suppose that $p\in D_ne$;  then either $p=e$ or $p = \tilde p b_je$ for some subword $\tilde p $ and some $j$.
	In the first case, $s_1 = xa^i_i  ey ve$ is reduced to $g = x y\phi(a_ib_i)ve \in xyL e\subset xP_nyLe$. In the second case, $s_1 = xa^i_i \tilde p b_j e y ve$ is reduced by (ii) to the monomial $xa_i \tilde p b_j b_i y ve$. This monomial is reduced by (iii) to the monomial $g = xy\phi(w)ve$, where $w = a_i \tilde p b_j b_i  \in D_n $. We see that in this case again $g \in xyLe$.
	
	Now, let $p \ne 1 $ and  $p\in P_n \setminus D_ne$. We have $p = we p'$ for some $w\in D_n, p'\in P_n e$. Then, the complete reduction $g = xa_i wb_i p'yve$ of $s_1 = xa^i_i  we p' y ve$ belongs to $xP_ne y L$.
	
	Now, it remains to prove that all elements of the set $
	xP_nyLe
	$
	belong to the minimal Gr\"obner basis. Indeed, 
	let us define a homomorphism $\alpha: \{ a_i, b_i, e \mid i=1,\ldots,n \}^* \to
	 \{ a_i^i, b_1^1, e\mid i=1,\ldots,n \}^*$ by putting 
		$\alpha: a_i \mapsto a_i^i, b_i\mapsto b_1^1, e\mapsto b_1^1$.
	Then, for each two words $v\in D_n, w\in P_n$ the element 
	$xw y ve$ is the complete reduction of the word $\alpha(wv) xy e$.
	Since this word $\alpha(wv) xye$ is divisible by the monomial (iv), it belongs to the ideal $\la R\ra$, so that the (normal) element $xw y \phi(v) e$ belongs to the minimal Gr\"obner basis,
	for all $v\in D_n, w\in P_n$.	
	\end{proof}

Note that the generating series of the languages $D_n$ and $P_n$ (with the standard degree functions) are
$$
H_{D_n}(q) = \frac{1-\sqrt{1-4nq^2}}{2nq^2}
$$
and 
$$
H_{P_n}(q) = H_{(D_ne)^*} (q) = \frac{1}{1-qH_{D_n}(q)} 
=\frac{2nq}{2nq-1+ \sqrt{1-4nq^2}}.
$$

\begin{corollary}
	$(a)$ For the associated monomial algebra $\widehat A$ to the algebra $A$ above, the graded vectors spaces $\Tor_i^{\widehat A} (k,k)$  are spanned, respectively, by the following sets $L_{i-1}$
	\begin{itemize} 
	
	\item
	$X$ (for $i=1$), 
	
	\item
	the disjoint union of the set of the first terms of (i)--(iii) with the set $xP_nyL$ (for $i=2$), 
	
	\item
	the set 
	$\{ a^1_1, a^2_2, \dots, a^n_n, b^1 \} xP_nyL$ (for $i=3$),
	
	\item
	 and the empty set  for each $i\ge 4$.
	\end{itemize}
	
	In particular, all these languages $L_{i-1}$ are c-f.
	Moreover, they are unambiguous c-f if the language $L$ is unambiguous c-f. 
	
	$(b)$ The	
	generating functions of these graded vectors spaces 
	 with respect to the degree function $|\cdot |$ are
	\begin{align*}
	   (2n^2 +2n+3) z^{d} + mz \mbox{ for } i=1, \\
	z^{2d} \left( 4n^3+4n^2 +3n+1 \right) + z^{3d} H_{P_n}(z^d) H_L(z) \mbox{ for } i=2, \\
	(n+1)z^{4d} H_{P_n}(z^d) H_L(z) \mbox{ for } i=3, 
	\\ 
	\mbox{and 0 for } i\ge 4.
\end{align*}
	
	$(c)$ Both algebras $A$ and $\widehat A$ have global dimension 3.
	
	$(d)$
	The Hilbert series of both algebras $A$ and $\widehat A$ with respect to the degree function $|\cdot |$ is 
	\begin{align*}
	\left(
	1-mz-	z^{d} (2n^2 +2n+3) +	z^{2d} ( 4n^3+4n^2 +3n+1 )
	\right.
	\\
	\left.
	+ z^{3d}
 (1-(n+1)z^d ) H_{P_n}(z^d)H_L(z) 
	\right)^{-1}.
	\end{align*}
	In particular, both algebras have exponential growth.

\end{corollary}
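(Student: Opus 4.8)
The plan is to derive the Corollary directly from Theorem~\ref{th:main_example} together with the homological machinery recalled in Subsection~\ref{subs:automaton}. The key input is the minimal Gr\"obner basis $U$ computed in the theorem: the associated monomial algebra $\widehat A$ is $k\la X\ra/\la \lm(U)\ra$, where $\lm(U)$ consists of the leading monomials of the relations (i)--(iii), which are precisely their first terms, together with the monomials $xP_nyLe$. So the set of relations $L_1$ of $\widehat A$ is the disjoint union displayed in part $(a)$ for $i=2$ — this handles the $i=2$ case of $(a)$ once one checks the union is really disjoint (the leading monomials of (i)--(iii) all have bounded length $\le 4D$ in the $|\cdot|$-grading and none begins with $x$ followed by the pattern forcing membership in $xP_nyLe$, so overlaps are impossible) and subword-free (the first terms of (i)--(iii) are pairwise non-nested by inspection of the variable ordering, and no element $xpyve$ with $p\in P_n,v\in L$ contains one of them as a factor nor contains another such element — the latter because $P_n,D_n$ are ``prefix-rigid'' enough that a shorter $xp'yv'e$ sitting inside forces $p'=p,v'=v$).

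Next I would compute the chain languages $L_n$ for $n\ge 1$ using the Govorov formulae, or more practically by directly building the minimal free resolution of $k$ over $\widehat A$. Since $L_1$ splits as a ``short'' part $L_1'$ (the first terms of (i)--(iii)) and a ``long'' part $xP_nyL$, the $2$-chains $L_2$ split accordingly. The short relations form a finite overlap-free-ish pattern whose $2$-chains are the finite set of self-overlap words — this is where the count $4n^3+4n^2+3n+1$ comes from, and I would simply enumerate the overlaps of the first terms of (i)--(iii) with each other (for each $i$, $a_i^i x$ overlaps $x a_i^i$ inside $a_i^i x a_i^i$, etc.; the $(ii)$-relations overlap the $(iii)$-relations $a_i^j y$, $b_i^j y$, and among themselves). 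For the long part, an element of $xP_nyL$ begins with $x$, so its only overlap with a short relation is with $a_i^i x$ or $b_1^1 x$ on the left, producing the $3$-chains $\{a_1^1,\dots,a_n^n,b^1\}\,xP_nyL$; crucially there are \emph{no} self-overlaps among the long monomials (two words $xpyve$ and $xp'yv'e$ overlap only trivially because the leading $x$ and the block structure $x\cdots y\cdots e$ are rigid), and the long$\times$long$\times$anything chains vanish. One then checks $L_3$ has no overlaps at all with anything new, giving $L_n=\emptyset$ for $n\ge3$, i.e. $L_{i-1}=\emptyset$ for $i\ge4$; this yields $(c)$, $\gd A=\gd\widehat A=3$, noting $\gd A\le\gd\widehat A$ always and the bound is attained since $L_3\ne\emptyset$ (assuming $L\ne\emptyset$). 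For the unambiguity claim in $(a)$: $X$ is finite hence regular; $L_1$ is a finite set unioned with $xP_nyL$, and $xP_nyL$ is unambiguous c-f whenever $L$ is, because $P_n=(D_ne)^*$ is unambiguous c-f (standard for Dyck-type languages) and concatenation with the rigid markers $x,y,e$ preserves unambiguity; similarly for $L_2,L_3$.

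Part $(b)$ is then a bookkeeping exercise: apply $H_W(z)=\sum_{w\in W}z^{|w|}$ to each $L_{i-1}$. For $i=1$, $L_0=X$ has $2n^2+2n+3$ variables of $|\cdot|$-degree $D$ — count them: $a_i,b_i$ ($2n$), $a_i^j,b_i^j$ ($2n^2$), and $x,y,e$ ($3$) — wait, one must reconcile with the stated $z^d$; here the exponent ``$d$'' in the Corollary is the weight $D$, and $H_L(z)$ is taken with the \emph{standard} degree on $T^*$ where $|t_k|=1$, plus the $m$ generators $t_k$ contributing $mz$. For $i=2$, the finite part contributes $z^{2D}(4n^3+4n^2+3n+1)$ and $xP_nyL$ contributes $z^{2D}\cdot H_{P_n}(z^D)\cdot H_L(z)$ (the two $x,y$ markers and... re-examine: $x,y,e$ are three markers of weight $D$ each but $e$ is the trailing one already inside, so a word $xpyve$ has $|\cdot|$-weight $2D+|p|_{|\cdot|}+|v|_{|\cdot|}+D = 3D + D\cdot|p|_{\mathrm{std}} + |v|_{\mathrm{std}}$ since letters of $P_n$ have weight $D$ and letters $t_k$ of $L\subset T^*$ have weight $1$) — giving $z^{3d}H_{P_n}(z^d)H_L(z)$ as stated. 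The $i=3$ term is $(n+1)$ copies (the $n$ words $a_i^i$ plus $b^1$, each of weight $D$) times the $i=2$ long term shifted by $z^D$, i.e. $(n+1)z^{4d}H_{P_n}(z^d)H_L(z)$. Finally $(d)$ is immediate from formula~(\ref{hilbhom}): $H_{\widehat A}(z)=\bigl(1-\sum_{i=0}^{3}(-1)^iH_{L_i}(z)\bigr)^{-1} = \bigl(1-H_{L_0}+H_{L_1}-H_{L_2}\bigr)^{-1}$, and substituting the expressions from $(b)$ and collecting the $H_{P_n}(z^d)H_L(z)$ terms (the $i=2$ long term with coefficient $+1$ and the $i=3$ term with coefficient $-1$, i.e. factor $(1-(n+1)z^d)$) gives the displayed formula; $H_A=H_{\widehat A}$ because the ordering is degree-compatible (deglex) by Subsection~\ref{subs:HS}. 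Exponential growth follows since the denominator, as a power series, has a nonzero coefficient pattern forcing a pole inside the unit disk — concretely because $H_{P_n}$ already grows exponentially and is not cancelled.

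\smallskip
\noindent\emph{Main obstacle.} The genuinely delicate point is the overlap analysis establishing $L_n=\emptyset$ for $n\ge3$ and pinning down $L_2,L_3$ exactly — in particular verifying that the ``long'' monomials $xP_nyLe$ have no proper self-overlaps and interact with the finite relations only through the single prefix $x$. This rests on the rigid block structure $x\,[\text{Dyck/$e$ material}]\,y\,[\text{$T$-material}]\,e$ forced by the three distinct markers $x,y,e$ together with the fact that $P_n$-words and $D_n$-words cannot begin or end with $x$, $y$; making this rigorous requires a careful but elementary case check on how two such words, or a short relation and such a word, can share a prefix/suffix. Everything downstream (the generating-function substitutions and the invocation of~(\ref{hilbhom})) is routine.
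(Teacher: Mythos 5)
Your overall strategy is the right one, and it is what the paper implicitly intends (no separate proof of the Corollary is given there): read off $L_1$ for $\widehat A$ from the leading monomials supplied by Theorem~\ref{th:main_example}, determine the chain languages $L_2,L_3,\dots$ by an overlap analysis (equivalently via the Govorov formulae of Subsection~\ref{subs:automaton}), and feed the resulting generating functions into formula~(\ref{hilbhom}). The bookkeeping in your part $(b)$ for the ``long'' monomials (the $z^{3d}$ and $(n+1)z^{4d}$ prefactors), the identity $H_A=H_{\widehat A}$ via degree-compatibility, and the global-dimension argument are all fine.

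The genuine gap is in the overlap analysis, which you correctly single out as the delicate point but then execute incorrectly. First, the number $4n^3+4n^2+3n+1$ is not a count of $2$-chains or ``self-overlap words'': it is simply the number of finite relations in (i)--(iii) ($n+1$ from (i), $4n^3+2n^2$ from (ii), $2n^2+2n$ from (iii)), i.e.\ the finite part of $L_1$ itself, each of $|\cdot|$-degree $2d$ --- which is why it sits in the $i=2$ row, not the $i=3$ row as your sketch has it. Second, your sample overlaps are wrong: $xa_i^i$ is the \emph{trailing} term of a type-(i) relation, not an obstruction of $\widehat A$, so ``$a_i^ix$ overlaps $xa_i^i$'' plays no role in the chain computation; and the type-(ii) obstructions end in the plain letters $a_l,b_l,e$, so they cannot overlap $a_i^jy$ or $b_i^jy$ (which begin with superscripted letters) --- they overlap $a_ly$ and $b_ly$. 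Third, and this is the step any correct execution must actually settle: those overlaps are real. The words $a_i^ja_ly$, $a_i^jb_ly$, $b_i^ja_ly$, $b_i^jb_ly$ lie in $X^+L\cap LX^+$ and, having length $3$, cannot lie in $X^+LX^+\cup L^2$, so by the Govorov formula they are candidates for $L_2$ of degree $3d$ --- $4n^3$ of them --- none of which appear in the claimed $L_2=\{a_1^1,\dots,a_n^n,b_1^1\}xP_nyLe$ or in the $i=3$ generating function $(n+1)z^{4d}H_{P_n}(z^d)H_L(z)$, whose lowest-degree term is $(n+1)z^{4d}$. Your proposal neither notices this tension nor resolves it; until you either show why these words do not contribute to $\Tor_3^{\widehat A}(k,k)$ or correct the $i=3$ series (and hence part $(d)$) accordingly, parts $(a)$ for $i=3$, $(b)$ and $(d)$ are not established.
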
	
	
\begin{remark}\hspace{-3pt}.\
The graded vector space $\Tor_3^{A} (k,k)$  has Hilbert series
$$
z^{3d} -  z^{3d}\left(1-(n+1)z^d\right) H_{P_n}(z^d)H_L(z).
$$
This formula follows from the equality $H_A(z)^{-1} = H_{\widehat A}(z)^{-1}$, where for each of the two 3-dimensional algebras $A$ and $\widehat A$ the inverse of the Hilbert series is equal to the Euler characteristic 
$1-H_{\Tor_1^{\cdot} (k,k)}(z)+H_{\Tor_2^{\cdot} (k,k)}(z)-H_{\Tor_3^{\cdot} (k,k)}(z)$.
\end{remark}

\subsection{Graded algebras examples}

In this subsection, we give new examples of graded finitely presented algebras with non-rational algebraic Hilbert series. These examples are based on the general construction described above.

\begin{example}\hspace{-3pt}.\
With the above notations, let $L $ be the Dyck langauge on the alphabet $T = \{t_1, \dots , t_{2n} \}$, so that $\phi$ is the obvious isomorphism and $d=1$. Then, the algebra $A$ is graded with the standard degree function with Hilbert series
	\begin{align*}
\left(
1-	z (2n^2 +4n+3) +	z^{2} ( 4n^3+4n^2 +3n+1 )
\right.
\\
\left.
+ z^{3}
(1-(n+1)z ) H_{P_n}(z)H_{D_n}(z) 
\right)^{-1}.
\end{align*}
Note that here
$$
H_{P_n}(z)H_{D_n}(z) = \frac{1-\sqrt{1-4nz^2}}{z\left(2nz-1+\sqrt{1-4nz^2}\right) } = {\frac {1-2z-\sqrt {-4\,n{z}^{2}+1}}{2z \left( nz+z-1 \right) }
}
,
$$	
so that the Hilbert series
	\begin{align*}
 H_A(z) =\left(1-	z (2n^2 +4n+3) +	z^{2} ( 4n^3+4n^2 +3n+\frac{1}{2} ) +z^3
 \right.
 \\
 \left.
 +
  \frac{z^2\sqrt{1-4\,n{z}^{2}}}{2} 
  \right)^{-1}
\end{align*}
 is not rational. This algebra is homologically unambiguous.
\end{example}

\begin{example}\hspace{-3pt}.\
	Now, let $L $ be the language consisting of all words on $t_1$ and $t_2$ with the same number of $t_1$-s and $t_2$-s. It is the image of $D_2$ under the homomorphism $\phi: a_1\mapsto t_1,
	b_1\mapsto t_2, a_2\mapsto t_2,
	b_2\mapsto t_1$. So, here $d=1$ (so that the algebra $A$ is graded with the standard grading) and $n=m=2$. Then, the Hilbert series of $A$ is 
		\begin{align*}
	\left(
	1-	17 z +	55 z^{2}
	+ z^{3}
	(1-3z ) H_{P_2}(z)H_{L}(z) 
	\right)^{-1},
	\end{align*}
	where
	$
	H_{L}(z) = \sum_{n\ge 0} \binom{2n}{n}z^{2n} = 
	{1}/{\sqrt{1-4z^2}}.
	$	
\end{example}

\begin{example}\hspace{-3pt}.\
	Finally, let $L$ be the language over the 26 capital letter alphabet 
	consisting of all words with balanced pairs of the words "BEG", "END"
	and of the words "FOR, END". It is the image of $D_2$ under the homomorphism $\phi: a_1\mapsto BEG,
	b_1\mapsto END, a_2\mapsto FOR,
	b_2\mapsto END$. Then, we have $n=2, m=26, d=3$. Since all generators of $L$ have the same length $3$, the algebra $A$ is graded with the Hilbert series 
	\begin{align*}
\left(
1-26z- 15	z^{3}  + 55	z^{6} 
+ z^{9}
(1-3 z^3 ) H_{P_2}(z^3)H_L(z) 
\right)^{-1},
\end{align*}	 
where
$$
H_L(z) = D_2(z^3) = \frac{1-\sqrt{1-8z^6}}{4z^6}.
$$	
In the explicit form, we have therefore 
$$
H_A(z) = \left(
1-26z- 15	z^{3}  + \frac{109}{2}	z^{6}+z^9+z^{6}
 \frac {\sqrt {1-8 {z}^{6}}}{ 2 }
\right)^{-1}.
$$
\end{example}

\end{document}